\documentclass[12pt]{amsart}
\usepackage[margin = 1.36in]{geometry}                % See geometry.pdf to learn the layout options. There are lots.
\usepackage{graphicx}
\usepackage{amssymb}
\usepackage{amsmath}
\usepackage{xcolor}

\usepackage{enumitem}

\usepackage{filecontents}

\newtheorem{thm}{Theorem}
\newtheorem{lma}[thm]{Lemma}
\newtheorem{cor}[thm]{Corollary}
\newtheorem{df}[thm]{Definition}
\newtheorem{prop}[thm]{Proposition}
\newtheorem{question}[thm]{Question}
\newtheorem{conjecture}[thm]{Conjecture}

\theoremstyle{remark}
\newtheorem{rmk}[thm]{Remark}

\newcommand{\om}{\omega}
\newcommand{\la}{\lambda}

\newcommand{\al}{\alpha}

\newcommand{\eps}{\varepsilon}

\newcommand{\R}{\mathbb{R}}
\newcommand{\Z}{\mathbb{Z}}
\newcommand{\Q}{\mathbb{Q}}
\newcommand{\C}{\mathbb{C}}
\newcommand{\F}{\mathbb{F}}

\newcommand{\D}{\mathbb{D}}

\newcommand{\mrm}[1]{\mathrm{#1}}
\newcommand{\cl}[1]{\mathcal{#1}}

\newcommand{\del}{\partial}

\renewcommand{\geq}{\geqslant}
\renewcommand{\leq}{\leqslant}

\title{On orderability and the chord conjecture}

\author{Egor Shelukhin}
\address{D\'epartement de Math\'ematiques et de Statistique, Universit\'e de
Montr\'eal, C.P. 6128 Succ. Centre-Ville, Montreal (Québec), H3C 3J7, Canada} 
\email{egor.shelukhin@umontreal.ca}

\date{Aug 10, 2026}                                           

\begin{document}

\begin{abstract}
We prove Arnol'd's chord conjecture for a large new class of contact manifolds: for every contact form and every closed Legendrian submanifold there exists a non-constant Reeb chord with endpoints on the Legendrian.
This class is characterized by contact non-orderability and rigidity of symplectizations. This proves the chord conjecture for Brieskorn manifolds, many prequantization spaces, and for all prequantization spaces under a mild topological condition on the Legendrians. Moreover, it provides a uniform upper bound on the length of the minimal chord. Our approach involves a new link between Mohnke's construction and contact Hofer geometry.
%Our arguments rely on the relation between non-orderability, the contact Hofer metric, and Chekanov-style estimates on the displacement energy.
\end{abstract}

\maketitle

\section{Introduction and main results}

Arnol'd has conjectured in 1986 \cite{ArnoldConj} that given an arbitrary closed Legendrian submanifold $K \subset S^3$ of the standard contact $3$-sphere, and any contact form $\al,$ there exists a non-constant $\al$ Reeb chord with boundary on $K.$ This conjecture was proven by Mohnke \cite{Mohnke}, whose proof generalizes to closed Legendrian submanifolds of $S^{2n-1}$ and other ideal contact boundaries of subcritical Weinstein domains (see also \cite{Zhou-minimal} for a generalization to the case of contact boundaries of Liouville domains with vanishing symplectic cohomology). Another major advance on this question is the work of Hutchings and Taubes \cite{HT1, HT2} establishing the conjecture for all closed Legendrian submanifolds of closed contact $3$-manifolds. We refer the reader to \cite{BCS-chord} for a further review of the literature, as well as to \cite{BC-chord, GZ-chord, Cant-strongchord} for very recent progress on the question.

%One of the main applications in this paper is the following result. 

To make our discussion precise, let us say that Arnol'd's chord conjecture holds for a closed Legendrian submanifold $K \subset Y$ of a closed contact manifold $(Y,\xi)$ if for every contact form $\al$ on $Y$ there exists a Reeb chord $c:[0,T] \to Y,$ $T>0,$ that is, a solution of the differential equation \[\dot{c}(t) = R(c(t))\] for all $t \in [0,T]$ with boundary conditions $c(0),c(T) \in K.$ Here $R$ is the Reeb vector field of $\al$ uniquely determined by the equations \[\al(R)=1,\; \iota_R d\al = 0.\]
We say that the conjecture holds for $Y$ if it holds for all such $K.$ %That is, for every closed Legedrian submanifold $K$ in $Y$ and every contact form $\al$ on $Y$ there exists a Reeb chord $c:[0,T] \to Y,$ $T>0,$ with $c(0),c(T) \in K.$ 

\medskip

The main application of the methods in this paper is the following result. Recall that a prequantization $(Y,\al)$ of a closed symplectic manifold $(M,\om)$ only exists if $[\om] \in H^2(M;\R)$ is an integral cohomology class, in the sense of belonging to the image of the natural map $H^2(M;\Z) \to H^2(M;\R),$ and it is a principal $S^1$-bundle $\pi: Y \to M$ with connection form $\al$ such that $d\al = \pi^*\om.$ Consider $(Y,\xi)$ for $\xi =\ker(\al)$ as a contact manifold.

\begin{thm}\label{thm:preq}
Arnol'd's chord conjecture holds for every prequantization $Y$ of a closed symplectically aspherical symplectic manifold. It also holds for prequantizations of closed monotone symplectic manifolds of minimal Chern number at least $2$ and for Liouville-fillable prequantizations. For an arbitrary prequantization $Y$ of a closed symplectic manifold Arnol'd's chord conjecture holds for all relatively spin Legendrians $K.$\end{thm}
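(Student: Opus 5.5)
The plan is to derive the statement from a general criterion, which the paper will set up: if $(Y,\xi)$ is contact non-orderable and its symplectization satisfies a rigidity property for the relevant Legendrian, then the chord conjecture holds. Here the rigidity property is a Chekanov-type lower bound on the displacement energy of the Lagrangian cylinder $\R\times K$, or of a compact piece of it, inside $SY$. I would first prove this criterion along the lines of Mohnke's argument.

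\textbf{Step 1 (criterion).} Suppose $\al$ has no Reeb chords on $K$.
\begin{enumerate}[label=(\roman*)]
\item Non-orderability gives a contractible loop of contactomorphisms $\phi_t$ whose contact Hamiltonian is positive for $t$ in a subinterval. Concatenating and reparametrizing, one obtains a contact isotopy that "pushes" by an arbitrary positive amount while remaining bounded in contact Hofer norm, with a bound depending only on $\al$.
\item Lift this isotopy to a Hamiltonian isotopy of $SY=\R_{>0}\times Y$ with a conformal cutoff.
\item Absence of chords means that the Reeb flow for time $T$ displaces the slice $\{r\}\times K$ from itself inside the region $[a,b]\times Y$. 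Combining the Reeb flow with the inverse of the non-orderability loop therefore displaces a compact piece $[a,b]\times K$, with Hofer energy bounded independently of $b/a$.
\item Rigidity says that the displacement energy of $[a,b]\times K$ grows at least like $a\cdot\hbar$, where $\hbar$ is a minimal disk or strip area. Taking the ratio $b/a$ large and rescaling (conformal invariance) gives a contradiction. The same inequality yields an upper bound on the minimal chord length.
\end{enumerate}

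\textbf{Step 2 (non-orderability).} Prequantizations are non-orderable in the cases at hand, via the lifted circle action and Eliashberg--Kim--Polterovich type results. The Reeb flow of the connection form is the $S^1$-action, which is a positive contractible loop after passing to a suitable cover or power. More precisely, I would use known non-orderability of prequantizations for which a power of the fiber class is contractible in the contactomorphism group.

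Here I expect trouble. Non-orderability of all prequantizations is not known. So the argument instead needs to use the positive loop given by the fiber rotation itself: it is a positive loop, possibly noncontractible, and the criterion should be adapted to positive loops by replacing "contractible" with the rigidity being measured in the appropriate component of the loop space.

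\textbf{Step 3 (rigidity in $SY$).} I would embed $SY$ near its zero section as the complement of the zero section in the associated line bundle $E\to M$, a symplectic disk bundle.
\begin{itemize}
\item \emph{Aspherical case.} Lagrangian Floer theory of $\R\times K$, or of its compactification to a Lagrangian lifted to the disk bundle, has no bubbling. Chekanov's theorem then gives displacement energy at least the minimal area of strips.
\item \emph{Monotone case with minimal Chern number $\ge 2$.} One uses monotonicity and Chekanov's bound for monotone Lagrangians, after checking that the lift $\Gamma_K$ is monotone.
\item \emph{Liouville-fillable case.} One works in the completion of the filling.
\item \emph{General case with relatively spin $K$.} One uses virtual counts (Fukaya--Oh--Ohta--Ono) to get Chekanov's energy bound for relatively spin Lagrangians in general closed manifolds. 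Relative spin provides orientations of the moduli spaces, so that a nonvanishing bounding-cochain-deformed Floer cohomology, or at least the energy bound, holds.
\end{itemize}

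\textbf{Main obstacle.} I expect the main obstacle to be Step 3 in the general case: producing a Lagrangian in a closed or tame manifold to which the energy–capacity inequality applies, and controlling it uniformly as the slice $[a,b]\times K$ stretches.
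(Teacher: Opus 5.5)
Your proposal has a genuine gap at the core of Step 1, in (iii)--(iv): $[a,b]\times K$ is the wrong object. It is a compact piece, with boundary, of the \emph{exact} cylinder $\R\times K$, so no Chekanov-type bound applies to it. In fact its displacement energy is zero. A closed Legendrian has a positive lower bound on its chord lengths, so for small $\eps$ the cut-off lift $\chi(\rho)e^{\rho}$ of the time-$\eps$ Reeb flow displaces $[a,b]\times K$ with Hofer energy about $b\eps$. Hence no inequality like $e([a,b]\times K)\gtrsim a\hbar$ can hold, and the absence of chords of length exactly $T$ gives you nothing.

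The missing idea is Mohnke's \emph{closed} Lagrangian. Take $L=\{(\rho,\phi^t_R(x)) : x\in K,\ (t,\rho)\in\gamma\}\cong S^1\times K$, where $\gamma$ is the smoothed boundary of the rectangle $[0,T-\delta]\times[-B,0]$.
\begin{itemize}
\item Since $\la|_L=e^{\rho}\,dt$, $L$ is Lagrangian and totally rational, with $\rho_L\approx(1-e^{-B})(T-\delta)\to T$.
\item $L$ is embedded precisely because there are no chords of length $<T$.
\item The lift of $\phi^T_R$ displaces $L$ when there are no chords of length $\le 2T$.
\end{itemize}
The rigidity input you need is then the \emph{sharp} bound $e(L)\ge\rho_L$, which is the displacement-type property. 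A bound that is merely linear in some minimal area $\hbar$ is not enough.

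The upper bound must also be sharp, because a positive loop saves only a bounded amount relative to $T$. Contractibility of the loop is never used; only $\theta_1=\mathrm{id}$ matters. So the fiber rotation of the prequantization suffices as it stands, and no ``component of the loop space'' refinement is needed. For a positive loop with Hamiltonian $g$ and $T\ge\max g$, the function $h(t,x)=T-g(1-t,\phi_R^{-tT}x)$ generates $\phi^T_R$, satisfies $h\ge 0$, and has $\ell(h)\le T-\min g<T$.

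Non-negativity is essential. After lifting to $e^{\rho}h$ and cutting off via Usher's trick, the Hamiltonian stays $\ge 0$ and compactly supported. Its oscillation therefore equals its maximum, and its Hofer length is at most $e^{\eps}\ell(h)<\rho_L$, which contradicts $e(L)\ge\rho_L$. A sign-changing $h$ could cost up to $2\max|h_t|$ in oscillation. Your claimed uniform contact-Hofer bound for the push is unjustified and does not address this issue.

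Finally, Step 3 as written does not give the sharp bound. In the line bundle $E$, a disk class satisfies $\om_\eps(A)=\langle[\la|_L],\del A\rangle-\eps\,(A\circ 0_E)$, so disks meeting the zero section can have area below $\rho_L$. A plain Chekanov/FOOO argument in $E$ therefore does not yield $e(L)\ge\rho_L$. You must restrict to classes with $A\circ 0_E=0$:
\begin{itemize}
\item in the aspherical case, via positivity of intersections and asphericity of $0_E$;
\item for general relatively spin $K$, via Daemi--Fukaya RGW compactifications;
\item in the monotone case, by working in the symplectization with SFT compactness and index estimates instead.
\end{itemize}
No monotonicity of a lifted Lagrangian is involved.
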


\begin{rmk} A few remarks regarding Theorem \ref{thm:preq}.
\begin{enumerate}[label = (\roman*)]
\item  Liouville-fillable prequantizations include the cosphere bundles $S^*Q$ of compact rank-one symmetric spaces: $Q \in  \{S^n, \C P^n, \mathbb H P^n, \mathbb O P^2\}.$
\item A submanifold $K$ of $Y$ being relatively spin means that $K$ is orientable and $w_2(K) = b|_K$ for a class $b \in H^2(Y;\F_2).$ For instance, all $K$ homeomorphic to a sphere are relatively spin.
\item The argument in the case of a general prequantization relies on virtual perturbation techniques of Daemi-Fukaya and their notion of RGW compactifications \cite{DF1,DF2,DF3}. The other cases only use classical transversality techniques.
\item We expect that the technical assumption on $K$ being relatively spin in the case of general prequantizations can be removed by developing the FOP perturbations approach \cite{BX1,BX2, BX3, R1, Rab} or the $\F_2$ Morava $K$-theory approach \cite{AB-Morava} in the setting of RGW compactifications \cite{DF1,DF2,DF3}. This will be the subject of future work.
%c) The symplectically aspherical case is proven in a few diffirent ways. %We expect that it should also follow from the recent approach of Oh \cite{Oh-WAC} to the chord conjecture. %However, the general case likely does not follow from this approach.
\item Theorem \ref{thm:preq} a special case of a more general result, Theorem \ref{thm:gen} below. Note that due to the Boothby-Wang theorem, prequantizations are exactly the contact manifolds admitting a contact form $\al_0$ whose Reeb flow is periodic of period $1$ and provides a free $S^1$-action. We expect Theorem \ref{thm:preq} to extend to all contact manifolds with periodic Reeb flow. These are prequantizations of symplectic orbifolds, and as they are weakly non-orderable (Definition \ref{def:ord}), it remains to prove that they are of displacement-type (see Definition \ref{def:chek} and Theorem \ref{thm:gen} below). We expect to do this in future work, for now restricting ourselves to the following result which directly follows from Theorem \ref{thm:gen} and Proposition \ref{prop:chek} below.\end{enumerate}
\end{rmk}

\begin{thm}\label{thm: Brieskorn}
Arnol'd's chord conjecture holds for all links of isolated weighted homogeneous hypersurface singularies $Y = p^{-1}(0)  \cap S^{2n+1} \subset \C^{n+1}.$ In particular, it holds for Brieskorn manifolds \[Y = \{  z_0^{a_0} + \ldots + z_n^{a_n} = 0 \} \cap S^{2n+1} \subset \C^{n+1},\] for integers $a_j \geq 1,$ $0 \leq j \leq n.$ 
\end{thm}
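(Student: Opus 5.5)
The plan is to deduce Theorem \ref{thm: Brieskorn} from the general criterion, Theorem \ref{thm:gen}, together with Proposition \ref{prop:chek}, as the preceding remark indicates. Theorem \ref{thm:gen} requires two properties of $(Y,\xi)$. The first is weak non-orderability in the sense of Definition \ref{def:ord}. The second is that $Y$ is of displacement type in the sense of Definition \ref{def:chek}, which I read as rigidity of the symplectization: Chekanov-type lower bounds on the displacement energy of Legendrians. So the proof reduces to checking these two inputs for the link $Y = p^{-1}(0)\cap S^{2n+1}$ of an isolated weighted homogeneous singularity.

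\textbf{Step 1: a periodic Reeb flow.} Suppose $p$ is weighted homogeneous with weights $w_0,\dots,w_n$, so that $p(t^{w_0}z_0,\dots,t^{w_n}z_n) = t^d p(z)$. The weighted circle action $e^{i\theta}\cdot z = (e^{iw_0\theta}z_0,\dots,e^{iw_n\theta}z_n)$ preserves $p^{-1}(0)$. I would take the contact form on $Y$ obtained by restricting the weighted form $\frac{i}{2}\sum_j \frac{1}{w_j}(z_j\,d\bar z_j - \bar z_j\,dz_j)$, suitably normalized. Its Reeb flow is this $S^1$-action, and Gray stability identifies the contact structure with the one induced from the round sphere. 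This action has finite isotropy in general, so $Y$ is a prequantization of a symplectic orbifold rather than of a manifold. For Brieskorn manifolds one takes $w_j = L/a_j$ with $L = \mathrm{lcm}(a_j)$.

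\textbf{Step 2: weak non-orderability.} With a periodic Reeb flow in hand, the full-period Reeb loop is a positive contractible-up-to-cover loop of contactomorphisms. Following the argument cited in the remark for prequantizations of orbifolds, one concatenates it with a small negative isotopy to obtain the non-orderability condition of Definition \ref{def:ord}. This step should be formal once Step 1 is established.

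\textbf{Step 3: displacement type.} This is the step I expect to be the main obstacle, and it is where Proposition \ref{prop:chek} is used. My reading is that Proposition \ref{prop:chek} establishes displacement type for contact manifolds with a suitable filling, for instance a Liouville or Weinstein filling with good properties, or a filling by an affine variety. The link $Y$ has a natural Stein filling by the Milnor fibre $p^{-1}(\epsilon)\cap B^{2n+2}$, which is an affine hypersurface. The task is therefore to verify that $Y$ with this filling satisfies the hypotheses of Proposition \ref{prop:chek}. If instead that proposition is phrased for boundaries of Liouville domains in general, the verification is immediate: the Milnor fibre is Liouville and its boundary is contactomorphic to $(Y,\xi)$, again by Gray stability along a deformation $\epsilon\to 0$. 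The main care needed is in matching contact structures, that is, checking that the weighted form of Step 1 and the filling-induced structure are isotopic. Linear interpolation of weights keeps the forms contact, because $p$ is quasi-homogeneous with an isolated singularity.

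Finally, Theorem \ref{thm:gen} applies to give the chord conjecture for every closed Legendrian $K\subset Y$ and every contact form. The Brieskorn case follows by specializing to $p = \sum_j z_j^{a_j}$, which is weighted homogeneous with an isolated singularity at the origin. When some $a_j = 1$ the link is the standard sphere, and this case is also covered.
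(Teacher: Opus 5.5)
Your proposal follows the paper's argument. A contact form with periodic Reeb flow gives weak non-orderability, the Milnor fibre is a Liouville filling so Proposition \ref{prop:chek} gives displacement type, and Theorem \ref{thm:gen} concludes.

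The one thing to fix is Step 2. The Reeb flow over one full period, reparametrized to $[0,1]$, is already a positive loop based at the identity: its contact Hamiltonian is a positive constant. That is exactly weak non-orderability, so you should stop there.

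The rest of Step 2 is wrong and unnecessary. Concatenating with a negative isotopy would destroy positivity. A positive \emph{contractible} loop is also not needed, and it is in general not available. Boundaries of Liouville domains with non-vanishing symplectic homology are orderable by Albers--Merry, and the paper notes this non-vanishing holds for the Milnor fibres when all $a_j \geq 2$. This is precisely why Theorem \ref{thm:gen} is stated with weak non-orderability.

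A smaller point: displacement type is the condition $e(L) \geq \rho_L$ on totally rational Lagrangians $L \subset SY$, not a condition on Legendrians. Since you only use Proposition \ref{prop:chek} as a black box, this misreading does not affect the argument.
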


Indeed, these manifolds are weakly non-orderable, as their standard contact form $\al = \la_{\C^{n+1}|_Y}$ has periodic Reeb flow (see \cite{Brieskorn1}). At the same time they are Liouville-fillable and therefore of displacement-type. Of course, only $a_j \geq 2$ for all $0 \leq j \leq n$ provide new examples, as the other cases are all contact boundaries of subcritical Weinstein domains and hence are covered by \cite{Mohnke}. All the standard fillings of Brieskorn manifolds with $a_j \geq 2$ are known to have non-trivial symplectic cohomology (see \cite{Brieskorn1, Brieskorn2}) and hence Theorem \ref{thm: Brieskorn} does not follow from \cite{Zhou-minimal}.  

Finally, we sharpen Theorem \ref{thm:preq} quantitatively as follows. 

\begin{cor}\label{cor:preq}
In Theorem \ref{thm:preq}, if $\al = f \al_0,$ for $\al_0$ the standard contact form with periodic Reeb flow of period $1,$ the chord $c:[0,T] \to Y$ can be chosen with \[T \leq 2\max_Y f.\] 
\end{cor}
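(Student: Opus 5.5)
The plan is to extract an explicit bound from the general mechanism behind Theorem \ref{thm:preq}, namely Theorem \ref{thm:gen}. That theorem combines weak non-orderability with the displacement-type property: the latter gives a Chekanov-style lower bound on contact Hofer-type energy needed to displace $K$ via Mohnke's construction. For a prequantization, non-orderability is witnessed by the full Reeb loop $\phi^t_{\al_0}$, $t\in[0,1]$. This is a positive contractible-in-a-suitable-sense loop of contactomorphisms generated by the constant Hamiltonian $1$ with respect to $\al_0$. I will track how the length of this loop enters the proof of Theorem \ref{thm:gen}, so that the chord length is bounded by the energy of the positive path used there.

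First I rewrite the data relative to $\al=f\al_0$. The Reeb loop of $\al_0$ is generated, with respect to $\al_0$, by the contact Hamiltonian $H\equiv 1$. With respect to $\al$ the same loop has contact Hamiltonian $\al(R_{\al_0}) = f\circ\phi^t$, which is bounded above by $\max_Y f$. So the positive loop has $\al$-length at most $\max_Y f$.

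Next I argue by contradiction, as in the proof of Theorem \ref{thm:gen}. Suppose there is no $\al$-Reeb chord on $K$ of length at most $2\max_Y f$. Then the $\al$-Reeb flow $\phi^s_\al$ displaces $K$ from itself, as a Legendrian, for all $s\in(0,2\max f]$. Lifting to the symplectization, Mohnke's construction turns this into a Lagrangian cylinder (or a displacement of the Lagrangian cone over $K$) whose displacement energy is controlled by the time parameter.

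In the comparison step, non-orderability lets me compose the Reeb flow with the inverse of the positive loop. Each traversal of the loop costs at most $\max f$, and the argument in Theorem \ref{thm:gen} uses it twice: once to go up and once to come back, or equivalently once for each end of the cylinder. This produces a contact isotopy of $\al$-Hofer norm at most $2\max f$ that displaces $K$ in a way forbidden by the displacement-type inequality. That inequality says such displacement requires energy exceeding the minimal chord/holomorphic-disk action. This contradiction gives $T\le 2\max_Y f$.

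The main obstacle is the bookkeeping in this last step. I must check that the constant in Theorem \ref{thm:gen} is exactly the factor $2$ times the positive-loop length, and that the length is measured by $\max$ of the $\al$-Hamiltonian rather than by oscillation. I would first try to verify this by inspecting the proof of Theorem \ref{thm:gen} directly. Failing that, I would establish it by reparametrizing the loop to have constant speed in $\al$, which is possible by a conformal time change.
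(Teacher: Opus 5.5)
Your overall idea (feed the Reeb loop of $\al_0$, viewed with respect to $\al$, into the proof of Theorem \ref{thm:gen}) is right. But there is a genuine gap in the comparison step: the mechanism you give for the factor $2$ is not the one in that proof, and the contradiction you claim does not follow.

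That proof uses the positive loop $\theta_t$ (with contact Hamiltonian $g>0$) exactly once. For $T\geq\max g$, the path $(\phi^{tT}_R\theta_{1-t})_{t\in[0,1]}$ from the identity to $\phi^T_R$ has contact Hamiltonian $h(t,x)=T-g(1-t,\phi_R^{-tT}x)$. This $h$ is \emph{non-negative} and satisfies $\ell(h)=T-\int_0^1\min_Y g_t\,dt<T$.

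Non-negativity is essential. After lifting to $e^\rho h$ on $SY$, applying Usher's trick and cutting off, you get a compactly supported $F_1\geq 0$ that displaces Mohnke's Lagrangian $L$. Its Hofer length is $\int_0^1\max|(F_1)_t|\,dt\leq e^{\eps}\ell(h)$. The displacement-type inequality is applied to $L\subset SY$, not to $K$. When $L$ is built from $K_t$, $t\in[0,T-\delta]$, its rationality constant satisfies $T-\eps<\rho_L<T$. The contradiction is then the \emph{strict} bound $\ell(h)<T$ against $\ell(h)\geq e^{-\eps}\rho_L\to T$.

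An isotopy of norm ``at most $2\max f$'' contradicts nothing, because it is not strictly below the rationality constant of any Lagrangian it displaces. Your fallback of reparametrizing the loop to constant $\al$-speed also fails: a time change multiplies the Hamiltonian by a function of $t$ alone and cannot remove the spatial variation of $f$. Nothing of that sort is needed anyway.

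The correct bookkeeping is as follows. With respect to $\al=f\al_0$, the loop generated by $R_{\al_0}$ has the autonomous contact Hamiltonian $\al(R_{\al_0})=f$, not $f\circ\phi^t$. So $T=\max_Y f$ is exactly the threshold for $h=T-f(\cdot)$ to be non-negative, and then $\ell(h)\leq \max_Y f-\min_Y f<T$.

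The factor $2$ comes solely from requiring $\phi^T_R(L)\cap L=\emptyset$. A point of $K_{t_1}$ sent by $\phi^T_R$ into $K_{t_2}$, with $t_1,t_2\in[0,T-\delta]$, is a Reeb chord of length $T+t_1-t_2\in[\delta,2T-\delta]$. So you must assume there are no chords of length at most $2T=2\max_Y f$, not that the loop is traversed twice. Choosing any larger $T\geq\max_Y f$ also works, but only yields the weaker bound $2T$.
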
 

\begin{rmk}
We note that the multiplicative factor of $2$ on the right-hand side is not optimal and we expect it to be possible to remove it by more careful cutoff considerations.\end{rmk}

\medskip

To describe the outline of the proof, we start with a few definitions.

\begin{df}\label{def:rational}
Let $(W,\om),$ $\om = d\la,$ be an exact symplectic manifold. We call a Lagrangian submanifold $L \subset W$ {\em totally rational} if $[\la|_L] \in H^1(L;\R)$ is a rational cohomology class, that is it lies in the subgroup $\rho \cdot H^1(L;\Z)/\mrm{tor}$ for a real number $\rho \geq 0,$ where $\mrm{tor}$ is the torsion subgroup. We call the supremum of such $\rho$ the rationality constant $\rho_L \in (0,\infty) \cup \{+\infty\}$ of $L.$ \end{df}

Note that $\rho_L > 0$ is finite if and only if $[\la_L] \neq 0,$ in which case it is determined by the condition that the period group of $\la_L$ satisfies \[\langle [\la_L], H_1(L;\Z)\rangle = \rho_L \cdot \Z.\] Moreover $\rho_L = +\infty$ if and only if $[\la_L] = 0,$ that is, when $L$ is an exact Lagrangian.
%We call a Lagrangian $L$ in the symplectization $(SY, d\la)$ of a contact manifold with its canonical primitive one-form $\la$ 

Recall that the Hofer displacement energy of a compact subset $B \subset W$ of a symplectic manifold $W$ (which will be open in our case) is defined as \[e(B) = \inf \ell(F),\] \[\ell(F) = \int_0^1 (\max_W F_t -\min_W F_t) \,dt,\]  for $F \in C^{\infty}_c({[0,1] \times W})$ a Hamiltonian with compact $W$-support $\mrm{supp}(F) = \cup_t \mrm{supp}(F_t),$ $F_t(x) = F(t,x),$ such that the Hamitonian flow $\phi^t_F$ of $F,$  generated by the time-dependent vector field $X^t_F,$  $\iota_{X^t_F} \om = - dF_t,$ displaces $B:$ \[\phi^1_F(B) \cap B = \emptyset.\]

\begin{df}\label{def:chek}
We say that a closed contact manifold $Y$ is of displacement-type if the displacement energy of a totally rational Lagrangian $L$ in the symplectization $SY$ of $Y$ is at least $\rho_L.$ We call it of $\Q$ displacement-type if the same is true for relatively spin such $L.$\end{df}

A few examples of such manifolds are provided by the following proposition.

\begin{prop}\label{prop:chek}
Every prequantization of a closed symplectic manifold is of $\Q$ displacement type. The following are also of displacement type: every prequantization of an aspherical symplectic manifold or of a mononotone symplectic manifold of minimal Chern number at least two; every Liouville-fillable contact manifold; every hypertight contact manifold; every contact manifold $Y$ with a symplectically aspherical filling $W$ such that $\pi_1(Y) \to \pi_1(W)$ is injective. \end{prop}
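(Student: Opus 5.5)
The plan is a Chekanov-type argument: compare the displacement energy of $L$ with the minimal area of holomorphic disks (or strips) with boundary on $L$, and then use total rationality to bound that area below by $\rho_L$. Concretely, in each case I will embed the symplectization $SY$ (or a region of it containing a compact neighborhood of $L$ and the support of a displacing Hamiltonian) into an ambient symplectic manifold $X$ where Floer theory for $L$ is available. I will then invoke Chekanov's theorem in the form: if $\phi^1_F(L)\cap L=\emptyset$, then $\ell(F)\geq \hbar(L)$. Here $\hbar(L)$ is the minimal symplectic area of a non-constant $J$-holomorphic disk with boundary on $L$, or of a sphere in $X$.

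The ambient manifolds are as follows.
\begin{itemize}
\item For a Liouville filling $W$, take $X$ to be the completion $\widehat W$, which contains $SY$ as the cylindrical end glued to $W$. In $X$ there are no holomorphic spheres.
\item For a prequantization $\pi:Y\to M$, the symplectization is the complement of the zero section in the associated line bundle. The natural compactification is $X=\mathbb{P}(\mathcal{L}\oplus\C)$, an $S^2$-bundle over $M$ in which $SY$ sits as the complement of two divisors $M_0, M_\infty$.
\item In the hypertight case, work directly in $SY$ (or its completion with the cylindrical ends) using SFT-type compactness. The absence of contractible Reeb orbits rules out disk bubbling into the ends.
\end{itemize}

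Next comes the area estimate. A disk $u$ with boundary on $L$ lying entirely in $SY$ has area $\int_{\partial u}\la\in\rho_L\Z$, which is at least $\rho_L$ when nonzero. For disks passing through the filling $W$, exactness of $\widehat W$ gives the same formula with the Liouville form, since $\la$ extends over $W$. If $\pi_1(Y)\to\pi_1(W)$ is injective and $W$ is only symplectically aspherical, I will use injectivity to replace a disk touching $W$ by one homotopic rel boundary into $SY$. Asphericity then shows that areas agree, so again the area lies in $\rho_L\Z$.

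In the prequantization case, disks may intersect the divisors. Since $\om_X$ restricted to $SY$ is $d\la$ with $\la$ singular along the divisors, the area of a disk $u$ equals $\int_{\partial u}\la$ plus a correction of the form $c\cdot(u\cdot M_\infty)$ or $-c\cdot(u\cdot M_0)$, depending on normalization. I will choose the symplectic form on $X$, equivalently the "radius" of the compactifying region, so that these corrections are non-negative multiples of a large constant. The cutoff level must be taken large enough to contain a neighborhood of $L$ and of $\mathrm{supp}(F)$, so that each non-constant disk has area at least $\rho_L$. Sphere bubbling I will handle by asphericity of $M$, or by monotonicity with $N\geq 2$, which keeps the Floer differential well defined and prevents Maslov index $\leq 0$ disk bubbling in the index computations. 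The divisor $M_\infty$ has positive normal bundle, so curves intersect it positively; I will run the argument in the region near $M_0$ using a $J$ making $M_0$ holomorphic.

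The main obstacle is the case of a general prequantization, where $M$ carries arbitrary spheres and one needs virtual techniques. There I will appeal to the Daemi--Fukaya RGW compactifications. These give Lagrangian Floer theory in the divisor complement $X\setminus M_\infty$, with Novikov-type coefficients, for relatively spin $L$, which is why only $\Q$ displacement-type is claimed. The needed input is the analogue of Chekanov's estimate in that framework: the Floer cohomology of $L$ with itself is nonzero, and a displacing Hamiltonian of Hofer length $<\rho_L$ would yield a contradiction via an energy filtration argument. This works because all disk and RGW-bubble energies lie in $\rho_L\Z_{>0}\cup[\text{large},\infty)$. Justifying that the obstruction/bounding-cochain issues do not spoil this energy gap is the step I expect to require the most care.
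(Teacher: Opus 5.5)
Your arguments for Liouville fillings, for $\pi_1$-injective aspherical fillings (after rescaling $L$ and $F$ by the Liouville flow, which multiplies $\rho_L$ and $\ell(F)$ by the same factor), and for hypertight manifolds are essentially the paper's.

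The gap is in the prequantization cases. Carry out the computation you leave ``depending on normalization'', with $X$ obtained by cutting $SY$ at $e^\rho=\eps$ and $e^\rho=R$. For a disk $u$ with boundary on $L$ one finds
\[
\om_X(u)=\int_{\partial u}\la-\eps\,(u\cdot M_0)+R\,(u\cdot M_\infty).
\]
The $M_\infty$ term is harmless. No choice of radius makes the $M_0$ term non-negative, however:
\begin{itemize}
\item $\eps$ must be small, so that $L$ and $\mathrm{supp}(F)$ lie in $\{e^\rho>\eps\}$;
\item once $M_0$ is $J$-holomorphic, $u\cdot M_0\geq 0$.
\end{itemize}
So disks through the zero section can have area $m\rho_L-k\eps\in(0,\rho_L)$. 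This is already visible when $M$ is a point. Then $X=S^2$, and the circle $\{e^\rho=c\}$, with $\rho_L=c$, bounds a holomorphic disk through $M_0$ of area $c-\eps$; indeed it is displaceable in $X$ with energy about $c-\eps$.

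Your claim that every non-constant disk has area at least $\rho_L$ is therefore false. Any argument counting all disks in $X$ cannot give the sharp bound $\rho_L$; this includes Chekanov's minimal-area bound and your proposed theory in $X\setminus M_\infty$. Asphericity of $M$ and $N\geq 2$, as you use them, control sphere bubbling and Maslov indices, not the areas of these disks.

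The missing idea is to exclude these disks rather than estimate them. For disk classes with $A\cdot 0_E=0$ the area is exactly $\langle[\la|_L],\partial A\rangle\in\rho_L\Z$; this is the paper's lemma. One then builds filtered Floer complexes in windows of length $<\rho_L$, and continuation maps, counting only configurations with zero intersection with $0_E=M_0$.
\begin{itemize}
\item In the aspherical case, positivity of intersections and asphericity of $0_E\cong M$ force such curves to avoid $0_E$. Alternatively, these prequantizations are hypertight.
\item For $N\geq 2$ an index argument is needed.
\item In general one uses Daemi--Fukaya RGW compactifications relative to the \emph{zero section}, not relative to $M_\infty$.
\end{itemize}

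The short window also removes the obstruction issue you flag. Zero-intersection disk-like configurations have positive energy, hence energy at least $\rho_L$, so they cannot contribute to $d^2$ and no bounding cochains are needed.

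Finally, the contradiction cannot come from $HF(L,L)\neq 0$: $L$ is displaceable in $SY$, so its full Floer cohomology vanishes. What is used is the window comparison map $HF(L)^{I_0}\to HF(L)^{I_{\rho_L-\eps}}$. It is non-zero because it is compatible with the inclusion of $H_*(L;\F_2)$ at action zero. Yet it factors through the window complex of $F$, which is zero when $\phi^1_F(L)\cap L=\emptyset$.
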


\begin{rmk}
Note that we require a sharp lower bound $e(L) \geq \rho_L$ as in the definition of displacement-type for the proof of Theorem \ref{thm:gen} below to go through. Such lower bounds were produced by Chekanov \cite{Chek} in geometrically bounded symplectic manifolds, generalizing the work of Polterovich \cite{Polterovich-displacement} proving the bound $e(L) \geq \frac{\rho_L}{2}.$ In contrast, for Mohnke's original argument \cite{Mohnke} as well as for \cite{Zhou-minimal}, a linear bound in $\rho_L$ is sufficient (or in fact any lower bound in $\rho_L$ which tends to infinity as $\rho_L \to \infty$).
\end{rmk}

Certainly not all contact manifolds are of displacement type or of $\Q$ displacement type. For instance overtwisted $Y$ of dimension $\geq 5$ are not of displacement type, as by Murphy \cite{murphy-exact} $SY$ has compact exact Lagrangians $K.$ Moreover, these Lagrangians have vanishing displacement energy by Sikorav and Chekanov \cite{Chek}. Further examples are provided in Murphy  \cite{murphy-exact} in dimension $\geq 5$ and by Dimitroglou-Rizell \cite{dr-exact} in dimension $3.$

%(like in Sikorav/Chekanov or Dylan/Lukas). Georgios also had some 3d examples.

%Definition 1: call a compact contact manifold Y displacement-type if the displacement energy of a rational Lagrangian L in the symplectization SY is at least the rationality constant of L. (In fact can ask for less, e.g. a. Chekanov's threshold instead of the rationality constant = "minimal area of holomorphic disk etc", or even less than thisb. actually the Lagrangians we need this for are quite special)
%Remark 1: 
%a. examples of displacement-type manifolds include:a1. hypertight Y, for example prequantizations of integral symplectically aspherical manifolds, and a2. Y with aspherical incompressible fillings (by incompressible I mean that the inclusion of Y into the filling is injective on \pi_1), e.g. S^*S^n for n\geq 3.
%a3. I also have an idea, which I am happy to outline, towards proving that all prequantizations are also displacement-type.
%b. non-examples include overtwisted Y of dim \geq 5 as by Murphy SY has compact exact Lagrangians and these can be very cheaply displaced (like in Sikorav/Chekanov or Dylan/Lukas). Georgios also had some 3d examples.

\begin{question}
How to characterize the class of displacement-type manifolds?
\end{question}

\begin{conjecture}\label{conj}
All tight contact manifolds should be of displacement type.
\end{conjecture}

\begin{rmk}
It appears that Conjecture \ref{conj} should hold whenever the contact homology algebra of $Y$ admits an augmentation. This statement and further results related to Theorems \ref{thm:preq} and \ref{thm: Brieskorn} are expected to appear in a forthcoming work of Zhengyi Zhou based on different methods that the ones presented in this paper.
\end{rmk}

% whose contact homology algebra admits an augmentation 

%Question: how to characterize the class of displacement-type manifolds?Conjecture: all Y whose contact homology algebra admits an augmentation should be in this class.

We make the following definition to fix terminology in a way convenient for this paper. Note that this terminology is not uniform in the literature: we choose it to agree with the original terminology from \cite{EKP,EP}. Call a loop of contactomorphisms $(\theta^t)_{t \in S^1}$ positive if its contact Hamiltonian satisfies $g(t,x) > 0$ for all $t \in [0,1]$ and $x \in Y.$

\begin{df}\label{def:ord}
We call a compact contact manifold $Y$ {\em weakly non-orderable} if there is a positive loop of contactomorphisms (and {\em non-orderable} if there is a positive contractible such loop). Likewise, we call it {\em strongly orderable} if there is no positive loop of contactomorphisms (and {\em orderable} if there is no contractible such loop).
\end{df}

An important result of \cite{EP} is that in the above definition one can equivalently assume the existence of a non-negative loop $(\theta^t)_{t \in S^1},$ that is its contact Hamiltonian satisfies $g(t,x) \geq 0$ for all $t \in [0,1]$ and $x \in Y,$ and therefore strong orderability and  orderability are equivalent to the existence of natural partial orders on the identity component $\mathrm{Cont}_0(Y,\xi)$ of the group of contactomorphisms of $(Y,\xi)$ and its universal cover respectively. 

%\red{CONTINUE FROM HERE}
%Definition 2 (just to fix terminology): call a compact contact manifold Y non-orderable if there is a positive loop of contactomorphisms (strongly non-orderable will be a positive contractible loop); call it strongly orderable if there is no positive loop of contactomorphisms (and orderable if there is no contractible such loop).
%
We are ready to state the most general result in this paper.

\begin{thm}\label{thm:gen}
The chord conjecture holds for every weakly non-orderable displacement-type contact manifold. It holds for relatively spin Legendrians for weakly non-orderable $\Q$ displacement-type contact manifolds.
\end{thm}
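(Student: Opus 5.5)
We argue by contradiction, following Mohnke's construction. Fix a closed Legendrian $K \subset Y$ and a contact form $\al$. Suppose $\al$ has no Reeb chord with endpoints on $K$ and length in $(0,T]$ for some large $T$ (eventually $T = \infty$). In the symplectization $SY = (\R_{>0}\times Y, d(r\al))$ we build a family of Lagrangians: flowing $K$ along the Reeb flow of $\al$ for times $t \in [0,T]$ gives an immersed, and by the no-chord assumption embedded, Legendrian-swept submanifold. Mohnke's recipe turns this into an embedded Lagrangian cylinder, or torus-type surgery, $L_T \subset SY$ diffeomorphic to $S^1 \times K$ (or $K\times[0,1]$ closed up). We close it up using the positive loop: weak non-orderability gives a positive loop $\theta^t$ with contact Hamiltonian $g>0$. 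After rescaling so that $g \geq c\,\al$-Reeb flow in the sense of the order, we concatenate the Reeb flow of $K$ with the inverse of the loop. This yields a closed exact-cobordism-type Lagrangian $L \cong S^1\times K$ in $SY$ with $[\la|_L]$ a multiple of the generator of the $S^1$ factor. So $L$ is totally rational with $\rho_L$ equal to the action $\int_{S^1\text{-loop}}\la$, which grows linearly in $T$ (roughly $T$ times the minimal radial level used).

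Next we show that $L$ is displaceable with energy bounded independently of $T$, or at least strictly smaller than $\rho_L$. The Hamiltonian isotopy of $SY$ lifting the contact isotopy $\theta^t$ (Hamiltonian $r\,g$) shifts $L$ along the Reeb direction. We combine it with a radial translation by a cut-off Liouville-type Hamiltonian, which separates the $r$-levels. The Hofer length of the displacing isotopy is controlled by $\max(rg)$ on the compact region containing $L$. The point is to compare this with $\rho_L$, which is of the same order in $r$ but carries an extra factor of the Reeb time $T$. Concretely we aim for $e(L) \leq C$ while $\rho_L \geq cT$. Alternatively, we place $L$ at radii where the loop's contribution is small. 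This is the contact Hofer geometry link: the positive loop makes "one full turn" cheap, while the period of $\la_L$ requires going around the full Reeb time. Choosing $T$ large gives $e(L) < \rho_L$, contradicting the displacement-type hypothesis (Definition \ref{def:chek}). In the $\Q$ version we additionally check that $L\cong S^1\times K$ is relatively spin whenever $K$ is, using the product structure and that the class $b$ pulls back.

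The main obstacle is the sharp comparison: the definition only gives $e(L)\geq \rho_L$, with no slack. So the construction must produce $L$ whose rationality constant is exactly the $\la$-period along the $S^1$-direction, and not a divisor of it coming from other cycles. This forces the $K$-directions to be exact, which holds since $K$ is Legendrian and $\la|_K=0$. The displacing Hamiltonian must also be estimated up to cutoff errors. I would first try working with $\al = f\al_0$ bounds, mirroring Corollary \ref{cor:preq}. There the loop is the Reeb flow of $\al_0$ and the estimate $e(L)\le 2\max f$ against $\rho_L \approx T$ gives the chord of length at most $2\max f$. The factor $2$ is the cutoff loss. The general weakly non-orderable case follows by the same estimate with $\max g/\min$-type constants.
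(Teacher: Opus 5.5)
\textbf{There is a genuine gap: the step that needs the positive loop, namely displacing $L$ with energy below $\rho_L$, is never constructed.} The mechanisms you list do not work.
\begin{enumerate}[label=(\roman*)]
\item The lift of the full loop $\theta$ to $SY$ is a loop of Hamiltonian diffeomorphisms ending at the identity, so it displaces nothing.
\item Radial translation is generated by the Liouville field, which is conformally symplectic, not Hamiltonian.
\item Moving $L$ to smaller radii rescales $\rho_L$ and the Hofer length of lifted Hamiltonians by the same factor, so it gains nothing.
\end{enumerate}
Your target $e(L)\leq C$ uniformly in $T$ is unsupported, and it conflicts with your own correct observation that the definition of displacement-type leaves no slack.

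You have also put the loop in the wrong place. It is not needed to close up $L$. Mohnke's Lagrangian is $\{(r,\phi^t_R x): (r,t)\in\gamma,\ x\in K\}$, with $\gamma$ the smoothed boundary of the rectangle $[e^{-B},1]\times[0,T-\delta]$. It is already closed, $\lambda$ pulls back to $r\,dt$, and $\rho_L$ is the enclosed area, which lies in $(T-\eps,T)$. Your ``concatenation with the inverse loop'' does not close anything: $\theta$ is based at the identity, so it returns $K_T$ to $K_T$, not to $K$. Moreover, the trace of a Legendrian isotopy with contact Hamiltonian $g_t$ is Lagrangian in this naive way only if $g_t$ is constant on $K_t$.

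\textbf{The missing idea is how the paper actually uses the loop.}
\begin{enumerate}[label=(\arabic*)]
\item $L$ is displaced by the lift $(r,y)\mapsto(r,\phi^T_R y)$. Indeed, $\phi^T_R(L)$ lies over Reeb times $[T,2T-\delta]$, so a common point would give a chord of length in $[\delta,2T-\delta]$.
\item The loop is used to generate $\phi^T_R$ cheaply. Along the path $\phi^{tT}_R\theta_{1-t}$ the contact Hamiltonian is $h_t=T-g_{1-t}\circ\phi^{-tT}_R$. This is non-negative once $T\geq\max g$, and $\ell(h)=T-\int_0^1\min_Y g_t\,dt<T$.
\item Non-negativity is essential. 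The cut-off lift $F_1\geq 0$ of $e^{\rho}h$ (via Usher's trick, equal to the lift along the region containing $L$) has oscillation equal to its maximum. Hence its Hofer length is at most $e^{\eps}\ell(h)$. For a signed Hamiltonian the oscillation could approach $2\max|h_t|$ and the comparison would fail.
\item The sharp bound now gives $T-\int_0^1\min_Y g_t\,dt\geq e^{-\eps}(T-\eps)$ for every $\eps>0$, which is a contradiction.
\end{enumerate}
The gain is a fixed amount $\int_0^1\min_Y g_t\,dt$ below $T$, not a bounded energy. That is exactly why $e(L)\geq\rho_L$ must be sharp.

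The same point corrects your reading of Corollary~\ref{cor:preq}. There one takes $T=\max f$, and the energy bound is $\ell(h)<\max f$, not $e(L)\leq 2\max f$. The factor $2$ arises because displacing $L$ by $\phi^T_R$ requires the absence of chords of length up to $2T$.
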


\begin{rmk} A few further remarks.
\begin{enumerate}[label = (\roman*)]
\item In view of Proposition \ref{prop:chek}, Theorem \ref{thm:gen} implies Theorem \ref{thm:preq}. 
\item It also recovers Mohnke's result \cite{Mohnke} for $\dim Y \geq 5$ by Proposition \ref{prop:chek} and the main result of \cite{HS-no}. 
\item Finally, the proof of Corollary \ref{cor:preq} shows that the length of the minimal chord in Theorem \ref{thm:gen} is bounded from above by $2S_0$ where $S_0$ is the infimum of all $S>0$ with $\phi^S_R$ generated by a {\em non-negative} contact Hamiltonian $h$ of contact Hofer length $\ell(h) < S.$ This suggests that the contact Hofer geometry of non-negative paths is a potentially interesting subject to be investigated in the future (see also \cite{EKP, Dahinden, Dahinden-old, H-ord}). 
\item Note that \cite{BC-chord, GZ-chord} prove the chord conjecture for certain manifolds, notably $Y = S^*T^n,$ which are strongly orderable by \cite{EKP, ChernovNemirovski}. Our result is thus complementary to theirs. Indeed, the only overlap between our result and that of \cite{BC-chord} appears to be the special case $Y = S^*Q$ for $Q$ a compact rank one symmetric space. %of both results.  
\end{enumerate}
\end{rmk}

\begin{rmk}
It is an interesting question whether the proof of the chord conjecture in dimension $3$ \cite{HT1, HT2} can be upgraded to be uniform in the above sense: there exists a constant $C(\al)$ depending only on the contact form $\al$ on $Y^3$ such that the length of the minimal chord of every Legendrian $K \subset Y$ is at most $C(\al).$ Note that the above proof as well as those in \cite{Mohnke, Zhou-minimal, GZ-chord, BC-chord} do produce such uniform bounds.
\end{rmk}

Recall that given a contact form $\alpha$ on a closed contact manifold $Y,$ the contact Hofer norm \cite{S-ch} of a contactomorphism $\phi \in \mathrm{Cont}_0(Y)$ is given as \[||\phi||_{\al} = \inf \ell(h),\] \[\ell(h) = \int_0^1 \max_Y |h_t|\,dt,\] $h_t(x) = h(t,x),$ where the infimum runs over all contact Hamiltonians $h$ with respect to $\al$ whose contact flow $\phi^t_h$ generated by the vector field $X^t_h$ defined by $\al(X^t_h) = h_t, \iota_{X^t_h} d\al|_\xi = -dh_t|_\xi,$ satisfies $\phi^1_h = \phi.$

A brief summary of the proof strategy of Theorem \ref{thm:gen} is now as follows. Using methods of \cite{S-ch} we expect to prove that the Reeb flow of a contact form $\alpha$ on a displacement-type manifold admitting a Legendrian without Reeb chords must have contact Hofer norm $||\phi^T_R||_{\al} = T$ for all $T \geq 0.$ By \cite{H-ord, HS-no}, this implies that $Y$ is strongly orderable, a contradiction. However, for technical reasons, to relate the Hamiltonian displacement energy in $SY$ to the contact Hofer length, it turns out useful to work with non-negative contact paths constructed explicitly. With this technical point in mind, the proof proceeds similarly to the above strategy, losing a factor of $2$ in the quantitative estimate.

%Theorem: The chord conjecture holds for every displacement-type non-orderable contact manifold.
%Corollary: The chord conjecture holds for prequantizations of (integral) symplectically aspherical manifolds, for S*S^n, n\geq 3 (actually for S^*M for M Zoll, simply connected, and of dimension \geq 3, like CP^m, m\geq 2 etc). It should also hold for all prequantizations.

%%We claim that this implies that, calculated with respect to $\alpha,$ the contact Hofer norm of $\phi^T_R$ is $|| \phi^T_R || = T.$ Indeed, suppose that $\phi^{T} = \psi^1_h$ for a contact Hamiltonian $h=h(t,x)$ 

\section{Proofs}

\subsection{Proof of Theorem \ref{thm:gen}}
Let $Y$ be a displacement-type weakly non-orderable contact manifold and $K$ a Legendrian without Reeb chords for a contact form $\alpha.$ For time $T$ look at $SY$ and Mohnke's Lagrangian $L$ built out of the images $K_t$ of $K$ under the Reeb flow $\phi^t = \phi^t_R$ of the contact form for $t \in [0,T-\delta],$ for $\delta>0$ very small, Liouville scalings of the endpoints by $e^{-\tau B}$ in $SY,$ for $B$ very large, and $\tau \in [0,1],$ and the $e^{-B}$ scalings of the $K_t.$ For $B$ very large, $\delta$ very small, and a carefully performed smoothing at the corners, this $L$ is totally rational with rationality constant $\rho_L$ arbitrarily close to $T,$ say $T-\eps<\rho_L<T$ for arbitrarily small $\eps>0.$ %By eventually passing to a limit, let's pretend that it is actually T. 

As $Y$ is displacement-type, the displacement energy of $L$ is at least $\rho_L.$ However $L$ is displaced by the canonical lift of $\phi^{T}_R$ to a Hamiltonian diffeomorphism of $SY.$ Let $(\theta_t)_{t \in [0,1]}$ be a positive loop of contactomorphisms based at the identity and let $g(t,x) > 0,$ $t \in [0,1], x\in Y$ be its positive contact Hamiltonian. If $T \geq \max_{[0,1] \times Y} g$ then \[h(t,x) = T - g(1-t, \phi^{-tT}_R x) \geq 0\] is a non-negative contact Hamiltonian generating $\phi^T$ with contact Hofer length \begin{equation}\label{eq: ell h} \ell(h) = \int_0^1 \max_Y |h_t| dt < T,\end{equation} for $h_t(x) = h(t,x).$ Indeed, as $\theta_1^{-1} = id,$ $h$ generates the contact flow $(\phi^{tT} \theta_{1-t})_{t\in [0,1]}$ from the identity to $\phi^T.$ Consider the Hamiltonian $H_t(\rho,x) = H(t,\rho, x) = e^\rho h(t,x)$ of the canonical lift of the contact flow $(\phi^t_h)$ of $h$ to $SY.$ Note that $H_t \geq 0$ for all $t \in [0,1].$

Like in \cite[Proof of Proposition 11, Proposition 41, Lemma 42]{S-ch}, use Usher's trick to change $H_t$ to $F_t \geq 0$ with the same time-one map, and the property that on $SY$ the Hofer length of $F_1(t,x) = F(t,x) \chi((\phi^t_F)^{-1} x),$ where $\chi$ is a cutoff which is $1$ on $A \times Y$ in $SY$ for $A = [e^{-S},1]$ and $0$ outside the $\eps$-neighborhood interval, is bounded by $e^{\eps} \ell(h).$ Note that as $(F_1)_t \geq 0,$ and it is compactly supported, \[\max_{SY} (F_1)_t - \min_{SY} (F_1)_t = \max_{SY} |(F_1)_t |\] for all $t \in [0,1].$ Therefore \[\ell(F_1) = \int_0^1 \max_{SY} |(F_1)_t|\, dt.\] However, $\phi^1_{F_1}(L) = \phi^1_F(L) = \phi^1_H(L),$ which is disjoint from $L.$ Hence, as $Y$ is displacement-type we get that \[\ell(h) \geq e^{-\eps} \rho(L)>e^{-\eps}(T-\eps)\] for all $\eps>0.$ Taking the limit as $\eps \to 0$ yields $\ell(h) \geq T$ in contradiction to \eqref{eq: ell h}. This finishes the proof in the displacement-type case.

%and therefore \[||\phi^T||+\eps \geq ||\phi^{T+\eps}|| \geq e^{-\eps}(T-\eps).\] Hence \[||\phi^T|| \geq e^{-\eps}(T-\eps) -\eps,\]
%as $\phi^1_H(\rho, x) = (\rho, \phi^T(x))$ as a map of $SY.$ 
%Now by \cite{H-ord, HS-no} this implies that $Y$ is strongly orderable, which is a contradiction to the assumption that Y is non-orderable. 

The proof applies verbatim for $\Q$ displacement-type manifolds, replacing ``Legendrian" by ``relatively spin Legendrian" everywhere.

\qed

\subsection{Proof of Corollary \ref{cor:preq}}

It is only required to estimate from above the infimum $S_0$ of the set of $S>0$ with $\phi^S$ being generated by a non-negative Hamiltonian $h$ with $\ell(h)<S.$ The above proof goes through and provides a chord of length $T \leq 2S_0.$ Indeed, for the proof to work for a given $T$ it is sufficient that $K$ has no chords of length at most $2T$ to construct $L$ and its displacement. %$|||| < S.$ %Every point of $L$ is of the form $(\rho, \phi_t(x))$ for $x \in K$ and $0 \leq t < T.$ On the other hand the Hamiltonian lift of $\phi_T$ to $SY$ maps $(\rho, x)$ to $(\rho, \phi_T(x)).$ Hence $\phi_T \phi_{t_1} x = \phi_{t_2} x$

The contact Hamiltonian with respect to $\al$ of the Reeb flow $(\psi^t)_{t \in \R/\Z}$ of $\al_0$ is given by $f>0.$ Therefore for $S = \max_Y f,$ $\phi^S$ is generated by the non-negative Hamiltonian $h(t,x) = S - f(\phi^{(1-t)T}_R x).$ Moreover \[\ell(h) \leq S-\min_Y f < S.\] Therefore $S_0 \leq \max_Y f$ and we obtain a chord of length $T \leq 2\max_Y f.$

% the contact Hofer norm (with respect to $\al$) satisfies \[|| \psi^{-s}\phi^S || = || \phi^{-S} \psi^s || \leq \int_0^1 \max_Y |-S + sf \circ (\phi^t)^{-1}|\,dt \] for $s=1$ this yields \[ || \phi^S || = || \psi^{-1}\phi^S || \leq \max_Y |-S + f| = \max \{ |S-\max_Y f|, |S-\min_Y f| \}.\] %Now $\phi^S$ is generated by a po %Set $a=\min_Y f \leq b= \max_Y f.$ Then \[d(S) := \max \{ |S-\max_Y f|, |S-\min_Y f| \} = |S-\frac{a+b}{2}| + \frac{b-a}{2}\] which, solving the inequality $d(S)<S,$ provides \[S_0 \leq \frac{b}{2} = \frac{\max_Y f}{2}.\]  %Therefore 

\subsection{Proof of Proposition \ref{prop:chek}}

Let us first prove the case of Liouville-fillable contact manifolds $Y.$ It is essentially contained in Mohnke \cite{Mohnke} and Chekanov \cite{Chek}. Let $F$ be a compactly supported Hamiltonian on $SY$ with $\phi^1_F$ displacing $L.$ Let $(W,\la_W)$ be a Liouville filling. A contact form $\al$ on $Y$ and the Liouville flow on $W$ provide an embedding $i: SY \to W$ with $i^*\la_W = \la_{SY},$ where $\la_{SY} = e^\rho \al$ is the standard Liouville form on $SY.$ By means of $i$ we can consider $L$ as a Lagrangian submanifold of $W,$ and using extension by zero, $F$ a compactly supported Hamiltonian on $W.$ Let $\om_W = d\la_W$ be the symplectic form on $W,$ and $J$ be an $\om_W$-compatible almost complex structure on $W$ which is of SFT type outside a compact subset of $W$ (which can be taken to lie in its infinite end): it sends the Liouville vector field $Z_W$ on $W$ to the Reeb vector field $i_*(R_\al),$ preserves $\ker \la_W$ and is invariant under the positive Liouville flow. This makes $(W,\om_W, J)$ into a geometrically bounded symplectic manifold.

By Chekanov's theorem \cite{Chek}, we obtain that \[\ell(F) \geq \min \{ \om_{W}(u)\}\] where $u$ run over all (stable) non-constant $J$-holomorphic disks with boundary on  $L$ and (stable) non-constant $J$-holomorphic spheres in $E$ passing via $L$ and $\om_W(u) = \int_u \om_W.$ (This is considered with the caveat that the minimum of an empty set is taken to be $+\infty.$) However, by Stokes' theorem and exactness of $\om_W$ no such holomorphic spheres exist and for every such disk \[u: (\D, \partial \D) \to (W, L),\] \[\om_W(u) = \int_{\partial u} \la_W > 0,\] where $\partial u = u|_{\partial \D}.$ Moreover, by total rationality of $L,$ and $\partial u$ taking values in the image of $i,$ \[\int_{\partial u} \la_W = \int_{i^{-1} \circ \partial u} \la_{SY} \in \rho_L \cdot \Z.\] In particular $\om_W(u) \geq \rho_L.$ Hence $\ell(F) \geq \rho_L,$ which finishes this proof.

The case of a symplectically aspherical filling is similar. First, by scaling $L$ by the Liouville flow on $SY$ we may assume that $L$ is contained in the positive end $S =(a,\infty) \times Y \subset SY$ which embeds into $W$ as before. Call the embedding $i.$ We proceed to rule out holomorphic spheres by the symplectically aspherical condition $[\om_W]|_{\pi_2(W)} = 0.$ To estimate $\om_W(u)$ for a non-constant $J$-holomorphic disk $u: (\D, \partial \D) \to (W, L),$ we proceed as follows. First $\om_W(u) > 0.$ Second the loop $\del u$ is contained in the image of $i,$ while it is contractible in $W.$ By the injectivity condition on $\pi_1,$ $i^{-1} \circ \del u$ is contractible in $S.$ Let $v: (\D, \partial \D) \to (SY, L)$ be a contracting disk. By the symplectically aspherical condition on $W$ again, \[\om(u) = \om(v) = \int_{\partial v} \la_{SY} \in \rho_L \cdot \Z.\] Therefore $\ell(F) \geq \rho_L,$ as before.

In the hypertight case, we might proceed following Chekanov's argument \cite{Chek}, Oh's approach to it \cite{Oh-disjunction}, or Floer homology in action windows as in \cite{CS-JDG, KS-GT} for example (or truncated Floer homology over the Novikov ring). It is convenient to work with the third approach for details of which we refer the reader to \cite{CS-JDG, KS-GT}. We only sketch the salient extra points in the current setup related to working directly in the symplectization.
Suppose that $\ell(F) < \rho_L$ and $\phi^1_F$ displaces $L$ in $SY.$ Let $\eps>0$ be sufficiently small so that $\ell(F)<\rho_L-2\eps.$ Consider the window $I_c = (-\rho_L, \eps) + c \subset \R$ for a constant $c \in \R.$ Let \begin{equation}\label{eq:window} CF(L;F, \cl D)^{I_c}\end{equation} be the Floer complex over $\F_2$ generated by Hamiltonian chords of $F$ from $L$ to $L$ contractible to $L$ with a very small perturbation datum to achieve transversality for curves contained in $SY$, which is supported in $\{\rho \geq \rho_F\} \subset SY$ where $\rho_F = \min_{\mathrm{supp} F} \rho.$ Note that only energy $\leq \rho_L - \eps$ Floer strips enter into the calculation of the differential \[d: CF(L;F, \cl D)^{I_c} \to CF(L;F, \cl D)^{I_c}.\] Of course, one needs to show that $d^2 = 0.$ Choosing an SFT-type almost complex structure on $SY$ with respect to a hypertight contact form at the negative end and with respect to an arbitrary contact form at the positive end of $SY,$ a standard SFT compactness argument (see e.g. \cite{AlbersFuchsMerry, MeiwesNaef}) shows that all energy at most $E = \rho_L -\eps$ Floer strips are contained in $\{\rho \geq \rho'_F\}$ for suitable finite $\rho'_F \leq \rho_F$ and the same is true of all holomorphic disks on $L,$ as otherwise a contractible Reeb orbit would be produced in the SFT limiting configuration. The curves are also all contained in $\{\rho \leq R\}$ for suitable large $R$ by the maximum principle in the positive end. Then by total rationality, the areas of the non-constant holomorphic disks is at least $\rho_L$ whence $d^2 = 0$ on $CF(L;F, \cl D)^{I_c}.$ Furthermore, for small perturbation data $\cl D', \cl D, \cl D''$, the usual continuation maps \[CF(L;0,\cl D')^{I_0} \to CF(L;F,\cl D)^{I_{c_+}} \to CF(L;0, \cl D'')^{I_{\rho_L-\eps}},\] \[c_+ = \int_0^1 \max F_t \,dt\] counting only curves in $SY$ are well-defined chain maps and are non-trivial in homology (again all curves lie in  $\{\rho \geq \rho''_F\}$ for suitable finite $\rho''_F \leq \rho'_F$). Indeed, they induce the interval comparison map $HF(L;0,\cl D')^{I_0} \to HF(L;0, \cl D'')^{I_{\rho_L-\eps}}$ which in turn commutes with the injection of the homology $H_*(L;\F_2)$ at the zero level. Note that this is a contradiction as $CF(L;F,\cl D)^{I_{c_+}}  = 0$ since $\phi^1_F(L) \cap L = \emptyset,$ which implies $HF(L;F,\cl D)^{I_{c_+}}  = 0.$ Therefore $\ell(F) \geq \rho_L$ as desired.

Now prequantizations of integral symplectically aspherical manifolds $(M,\om)$ are well-known to be hypertight, for the prequantization contact form \cite{AlbersFuchsMerry}. Hence such prequantizations are of displacement type. For prequantizations of monotone symplectic manifolds of minimal Chern number at least two, there is a similar SFT argument for working only in the symplectization which additionally involves considerations of index: see \cite{SUV} (and \cite{ASZ}) for the relevant index estimates. 

Let us now present a somewhat different argument in the symplectically aspherical case, as it appears to generalize most conveniently (it also applies in the monotone case with minimal Chern at least two following \cite[Remark 3.11]{SUV}). Let $\pi: Y \to M$ be a prequantization space of a closed (integral) symplectic manifold $(M, \om).$ Let $\al$ denote the standard contact form on $Y$ such that $d\al = \pi^*\om.$ Then $Y$ is the hypersurface of unit-norm vectors in a complex Hermitian line bundle $\pi: E\to M.$ In other words if we set $r^2: E \to \R$ to be the norm-squared function of the Hermitian metric, then $Y = \{r^2 = 1\}.$ For every $\eps >0,$ \[\om_{\eps} = \eps\pi^* \om + d(r^2 \al)\] is a symplectic form on $E.$ Let $J_M$ be an $\om$-compatible almost complex structure on $M$ and let $j_E$ denote the complex structure on $E.$ There exists an almost complex structure $J$ on $E$ such that $\pi: E \to M$ is $(J,J_M)$-holomorphic and $J = j_E$ on the vertical subbundle $V = \ker(D\pi) \cong \pi^*E$ of $TE.$ Indeed, we should only remark that the horizontal subbundle $H \subset TE,$ $H = \ker(\al)$ on $T(E\setminus 0_E)$ and $H = T(0_E)$ is isomorphically indentified with $\pi^*TM$ by $D\pi,$ where $M \cong 0_E \subset E$ is the zero section. Note that $J$ is compatible with $\om_{\eps}$ for all $\eps > 0.$ Note that $(E,\om_{\eps}, J)$ makes $E$ into a tame symplectic manifold. Moreover, the subset \[S_{\eps}Y = (\log(\eps),\infty) \times Y \subset SY\] with its canonical symplectic form $\om_{SY} = d\lambda,$ $\lambda = e^{\rho} \al,$ $\rho: SY = \R \times Y \to \R$ the projection to the first coordinate, embeds symplectically into $(E,\om_{\eps})$ by \[i_{\eps}: (\log(\eps),\infty) \times Y \to E,\] \[i_{\eps}(\rho, y) = ry\] for \[r = (e^{\rho} - \eps)^{1/2}.\] The image of $i_{\eps}$ is the symplectic submanifold \[E_{\eps} = (E \setminus \{0_E\}, \om_\eps)\] of $(E,\om_{\eps}).$ In fact, $(E,\om_{\eps})$ can be identified with the symplectic cut of $SY$ at the level $\log(\eps)$ with respect to the canonical Hamiltonian circle action generated by the Hamiltonian $H(\rho,x) = \rho$ on $SY.$

Now let $L \subset SY$ be a totally rational Lagrangian and $F$ a compactly supported Hamiltonian on $SY$ with $\phi^1_F$ displacing $L.$ Choose $\eps>0$ sufficiently small so that $\mrm{supp}_M(F) \subset S_{2\eps} Y \subset S_{\eps} Y.$ Consider $F$ as a Hamiltonian on $(E,\om_{\eps})$ via the identification $i_{\eps}:S_\eps Y \to E_{\eps}$ and extension by zero over $0_E$ to $(E,\om_\eps).$ We rely on the following key observation.

\begin{lma}\label{lma: int zero}
Let $A \in H^D_2(E, i_\eps(L); \Z)$ be a disk relative homology class. Suppose that the intersection number of $A$ and the zero-section divisor $0_E$ of $E$ satisfies $A \circ 0_E = 0.$ Then \[\om_\eps(A) = \langle [\lambda|_L], (i_\eps)^{-1}_*(\del A) \rangle \in \rho_L \cdot \Z\] for the connecting homomorphism $\del: H^D_2(E, i_\eps(L); \Z) \to H_1(i_\eps(L); \Z).$
\end{lma}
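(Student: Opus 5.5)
The strategy is to write $\om_\eps$ as the differential of a primitive on $E \setminus 0_E$. We then account for the correction coming from the zero section using the intersection number, which vanishes by hypothesis. On $E_\eps = E\setminus 0_E$ we have
\[\om_\eps = \eps \pi^*\om + d(r^2\al) = d((\eps + r^2)\al),\]
since $d\al = \pi^*\om$ there. Moreover, by construction of $i_\eps$ (with $r^2 = e^\rho - \eps$) we get $i_\eps^*((\eps+r^2)\al) = e^\rho \al = \lambda$. So $\lambda_\eps := (\eps+r^2)\al$ is a primitive of $\om_\eps$ on $E\setminus 0_E$ that pulls back to $\lambda$ on $S_\eps Y$. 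This primitive blows up near $0_E$ only through the angular form $\al$. Its behaviour near the zero section is $\lambda_\eps \approx \eps\,\al$, where $\al$ restricted to a small circle in a fiber integrates to $\pm 1$ (the normalization of the prequantization, with period-$1$ Reeb flow).

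First, represent $A$ by a smooth map $u:(\Sigma,\del\Sigma)\to (E, i_\eps(L))$ from a disk, or more generally by a chain, transverse to $0_E$. This is possible since $0_E$ is a codimension-two submanifold disjoint from $i_\eps(L)$. The preimage $u^{-1}(0_E)$ is then a finite set of points $p_1,\dots,p_k$ with signs $s_j$ satisfying $\sum s_j = A\circ 0_E = 0$. Remove small disks $D_j$ around the $p_j$. Stokes' theorem on $\Sigma' = \Sigma\setminus\bigcup D_j$ gives
\[\int_\Sigma u^*\om_\eps = \lim \int_{\Sigma'} u^*\om_\eps = \int_{\del\Sigma} u^*\lambda_\eps - \sum_j \int_{\del D_j} u^*\lambda_\eps,\]
with the first equality holding as the $D_j$ shrink. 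As the radius tends to zero, $u(\del D_j)$ becomes a small loop linking $0_E$ with winding number $s_j$ in the fiber direction. Since $\lambda_\eps = (\eps + r^2)\al$ with $r\to 0$, each boundary term converges to $\eps s_j$ (up to a universal sign convention). The total correction is therefore $\eps\sum_j s_j = \eps\,(A\circ 0_E) = 0$. The boundary term equals $\int_{i_\eps^{-1}\circ \del u}\lambda = \langle[\lambda|_L],(i_\eps)^{-1}_*\del A\rangle$, which lies in $\rho_L\cdot\Z$ by total rationality of $L$, as in Definition~\ref{def:rational}.

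The main obstacle is the local computation at the punctures, together with the justification that it is homological. Concretely, one must check that $\int_{\del D_j} u^*\al \to s_j$. This holds because near $0_E$ the form $\al$ restricts on each fiber to $d\theta/2\pi$ (in the period-$1$ normalization), plus a horizontal part pulled back from $M$ that integrates to $O(\text{radius})$. The winding of $u(\del D_j)$ around $0_E$ in the normal direction is exactly the local intersection sign. I would also remark that the result is independent of the representative, since both sides depend only on $A$; alternatively, one can phrase the whole argument via the relative class in $H_2(E\setminus 0_E, i_\eps(L))$ obtained by tubing off intersection points of opposite sign, which exists precisely because $A\circ 0_E=0$.
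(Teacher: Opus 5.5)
Your proof is correct and is essentially the paper's argument. The paper also relies on $(\eps+r^2)\al$ pulling back to $\lambda$ under $i_\eps$. Instead of excising the intersection points, it quotes the known identity $A\circ 0_E=\int_{\del u}\al-\int_u\pi^*\om$ and combines it with the smooth global primitive $r^2\al$ of $d(r^2\al)$. Your puncture computation, which gives a correction of $\eps\,(A\circ 0_E)$ up to a universal sign, is a proof of that cited identity, so your version is the same argument made self-contained.
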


\begin{proof}
Suppose that $A$ is represented by a disk $u:(\D, \del \D) \to (E, i_\eps(L)).$ It is well-known (see \cite[Sections 3.4, 5.1]{Frauenfelder-hab} and also \cite[Lemma 20]{S-VZ} with the difference that in our conventions $d\al = \pi^*\om$ instead of $d\al = -\pi^*\om$) that \[A \circ 0_E = \int_{\del u} \al - \int_u \pi^*\om.\] Hence the condition of the lemma implies that \[\int_{\del u} \al = \int_u \pi^*\om.\] We now calculate \[ \om_\eps(A) = \eps \int_u \pi^*\om + \int_{\del u} r^2 \al =  \int_{\del u} (\eps+r^2) \al = \int_{(i_\eps)^{-1} \circ \del u} \lambda.\] The result now follows by evident identifications and the total rationality of $L.$
\end{proof}

Suppose that $(M,\om)$ is symplectically aspherical, and $||F|| < \rho_L.$ Considering Floer complexes in short action windows and continuation maps as above between them, together with Lemma \ref{lma: int zero}, shows that there is a well-defined complex where only curves inside $E_{\eps}$ are considered. Indeed, consider only curves and Floer strips with zero intersection number with the zero section. For small perturbations of the above almost complex structures away from the zero section, each such curve consists of a root component and bubble trees attached at the intersection points which must be contained in the zero section in view of the maximum principle. By the positivity of intersection of the root component with $0_E$ and the total zero-intersection condition with $0_E,$ either the root component is disjoint from the zero section and there are no bubbles or at least one bubble in all the bubble trees must be non-constant. However, the zero section is symplectically aspherical and hence the bubble configuration must be constant. Therefore the only possibility is that there is only a root component which lies inside $E_\eps.$ The property $d^2 = 0$ now follows from Lemma \ref{lma: int zero} or in fact by direct calculation in $E_{\eps}$. The proof now proceeds identically to the one in the hypertight case. 

%The only possible sphere bubbling of a sequence of such objects outside $0_E$ would be into the zero section, and the bubble configuration would be non-constant by positivity of intersection of the root component with $0_E$ and the total zero-intersection condition. However, the zero section is aspherical and hence the bubble configuration must be constant. The property $d^2 = 0$ follows from Lemma \ref{lma: int zero} or in fact by direct calculation, once we know that no curve can intersect 

Finally, in the general case, for relatively spin totally rational Lagrangians, the methods of \cite{DF1, DF2, DF3}, see also \cite[Section 4]{DF-survey}, together with Lemma \ref{lma: int zero} immediately provide complexes in short action windows as well as continuation maps as above, based on counting so-called RGW configurations of curves, whose homology classes have zero intersection numbers with $0_E.$ Then the above proof structure proceeds analogously.

We provide further details regarding the general case. Note that in the general one cannot rule out configurations involving bubble trees lying in the zero section. Daemi and Fukaya instead count certain such configurations, which in particular have zero total intersection number with the zero-section, using virtual perturbation techniques. They introduce special compactifications of such objects, the RGW compactifications, which are different from the usual stable curve compactifications, prove that they admit Kuranishi structures and describe their normalized boundaries \cite[Theorems 2.8, 2.9, Corollary 2.10]{DF3}. This allows them to define the differential $d$ by virtually counting the points in suitable dimension $0$ RGW moduli spaces \cite[Definition 2.13]{DF3}. We make the same definition but focus on an action window shorter than $\rho_L,$ similarly to \eqref{eq:window} above.

The fact that, in our situation $d^2 = 0$ relies on the results of \cite{DF3} regarding these compactifications together with Lemma \ref{lma: int zero}. We follow the same proof as for \cite[Theorem 2.16, Section 2.4]{DF3} until showing that the contributions of the components of Items (2) and (3) of \cite[Theorem 2.8]{DF3} vanishes: see discussion near \cite[Equation 2.53]{DF3}. These Items involve bubbling off of ``disk-like" RGW configurations. To do this, instead of relying on monotonicity outside of the divisor as in \cite{DF3}, we observe that all the RGW configurations contributing to these components non-trivially must have strictly positive energy. Hence, as the energy of ``disk-like" configurations is equal to their symplectic area, by Lemma \ref{lma: int zero} they must have energy at least $\rho_L.$ However, as the length of our action window is strictly smaller than $\rho_L,$ these contributions do not affect any counts in this action window. Therefore $d^2 = 0$ in our case. The fact that continuation maps are chain maps and compositions induce interval comparison maps again follow from the properties of RGW compactifications as in \cite{DF1, DF2, DF3}, together with ruling out contributions of ``disk-like" elements in short windows by Lemma \ref{lma: int zero}.

\section*{Acknowledgments} I thank Mohammed Abouzaid, Peter Albers, Shaoyun Bai, Filip Bro\'ci\'c, Dylan Cant, Octav Cornea, Jakob Hedicke, Asaf Kislev, Leonid Polterovich, and Frol Zapolsky for interesting discussions and collaborations on related subjects and Zhengyi Zhou for an interesting discussion about his ongoing project. This work was supported by an NSERC Discovery grant, an FRQNT Teams grant, and by the Courtois chair in fundamental research.

\bibliographystyle{abbrv}
\bibliography{refs}

\end{document}